\patchcmd{\@thm}{\let\thm@indent\indent}{\let\thm@indent\noindent}{}{}
\patchcmd{\@thm}{\thm@headfont{\scshape}}{\thm@headfont{\bfseries}}{}{}
\definecolor{dkgreen}{rgb}{0,0.6,0}
\definecolor{gray}{rgb}{0.5,0.5,0.5}
\definecolor{mauve}{rgb}{0.58,0,0.82}
\tiny\color{gray},
\newcommand{\xar}[1]{\xrightarrow{{#1}}}
\def\@tocline#1#2#3#4#5#6#7{\relax
  \ifnum #1>\c@tocdepth 
  \else
    \par \addpenalty\@secpenalty\addvspace{#2}%
    \begingroup \hyphenpenalty\@M
    \@ifempty{#4}{%
      \@tempdima\csname r@tocindent\number#1\endcsname\relax
    }{%
      \@tempdima#4\relax
    }%
    \parindent\z@ \leftskip#3\relax \advance\leftskip\@tempdima\relax
    \rightskip\@pnumwidth plus4em \parfillskip-\@pnumwidth
    #5\leavevmode\hskip-\@tempdima
      \ifcase #1
       \or\or \hskip 1em \or \hskip 2em \else \hskip 3em \fi%
      #6\nobreak\relax
    \hfill\hbox to\@pnumwidth{\@tocpagenum{#7}}\par
    \nobreak
    \endgroup
  \fi}
\crefname{section}{\S}{\S\S}
\crefname{subsection}{\S}{\S\S}
\crefname{axioms}{Axiom}{Axioms}
\crefname{exercise}{Exercise}{Exercises}
\crefname{exercisenum}{Exercise}{Exercises}
\crefname{construction}{Construction}{Constructions}
\crefname{problem}{Problem}{Problems}
\crefname{theorem}{Theorem}{Theorems}
\crefname{definition}{Definition}{Definitions}
\crefname{prop}{Proposition}{Propositions}
\crefname{lemma}{Lemma}{Lemmas}
\crefname{example}{Example}{Examples}
\crefname{examplealph}{Example}{Examples}
\crefname{corollary}{Corollary}{Corollaries}
\crefname{nonexample}{Nonexample}{Nonexamples}
\crefname{equation}{}{}
\crefname{summary}{Summary}{Summaries}
\crefname{recollection}{Recollection}{Recollections}
\Crefname{recollection}{Recollection}{Recollections}
\Crefname{nonexample}{Nonexample}{Nonexamples}
\Crefname{corollary}{Corollary}{Corollaries}
\Crefname{corollary}{Corollary}{Corollaries}
\Crefname{axioms}{Axiom}{Axioms}
\Crefname{exercise}{Exercise}{Exercises}
\Crefname{exercisenum}{Exercise}{Exercises}
\Crefname{construction}{Construction}{Constructions}
\Crefname{problem}{Problem}{Problems}
\Crefname{theorem}{Theorem}{Theorems}
\Crefname{definition}{Definition}{Definitions}
\Crefname{prop}{Proposition}{Propositions}
\Crefname{lemma}{Lemma}{Lemmas}
\Crefname{example}{Example}{Examples}
\Crefname{examplealph}{Example}{Examples}
\Crefname{section}{\S}{\S\S}
\Crefname{subsection}{\S}{\S\S}
\DeclareMathOperator{\spec}{Spec}
\DeclareMathOperator{\QCoh}{\mathrm{QCoh}}
\newtheorem{lemma}{Lemma}
\newtheorem{theorem}[lemma]{Theorem}
\newtheorem{prop}[lemma]{Proposition}
\newtheorem*{conj-moore}{Conjecture~\ref{moore-splitting}}
\newtheorem*{conj-cent}{Conjecture~\ref{centrality-conj}}
\newtheorem*{conj-tmf}{Conjecture~\ref{tmf-conj}}
\newtheorem*{thm-main}{Theorem~\ref{main-thm}}
\newtheorem*{cor-bpn}{Corollary~\ref{bpn}}
\newtheorem*{thm-string}{Theorem~\ref{mstring}}
\theoremstyle{definition}
\newtheorem{remark}[lemma]{Remark}
\newtheorem{recall}[lemma]{Recollection}
\newcommand{\FF}{\mathbf{F}}
\newcommand{\Z}{\mathbf{Z}}
\newcommand{\cC}{\mathcal{C}}
\newcommand{\RR}{\mathbf{R}}
\newcommand{\co}{\mathcal{O}}
\newcommand{\LMod}{\mathrm{LMod}}
\newcommand{\Lone}{L_{K(1)}}
\newcommand{\Eoo}{{\mathbf{E}_\infty}}
\newcommand{\E}[1]{{\mathbf{E}_{{#1}}}}
\newcommand{\GL}{\mathrm{GL}}
\newcommand{\MU}{\mathrm{MU}}
\newcommand{\BU}{\mathrm{BU}}
\newcommand{\ku}{\mathrm{ku}}
\newcommand{\BP}[1]{\mathrm{BP}\langle{#1}\rangle}
\newcommand{\SU}{\mathrm{SU}}
\newcommand{\BPP}{\mathrm{BP}}
\renewcommand{\H}{\mathrm{H}}
\newcommand{\Conf}{\mathrm{Conf}}
\newcommand{\F}{\mathrm{F}}
\newcommand{\B}{\mathrm{B}}
\newcommand{\HH}{\mathrm{HH}}
\newcommand{\HP}{\mathrm{HP}}
\newcommand{\THH}{\mathrm{THH}}
\newcommand{\dR}{\mathrm{dR}}
\newcommand{\fr}[1]{\mathfrak{#1}}
\newcommand{\un}{\mathrm{un}}
\renewcommand{\S}{Section }
\providecommand{\leftsquigarrow}{%
  \mathrel{\mathpalette\reflect@squig\relax}%
}
\newcommand{\reflect@squig}[2]{%
  \reflectbox{$\m@th#1\rightsquigarrow$}%
}
\newcommand{\Cp}{{\Z/p}}
\renewcommand{\tilde}{\widetilde}
\newcommand{\diffr}{\widehat{\Omega}^{\slashed{D}}}
\DeclareSymbolFontAlphabet{\mathbb}{AMSb} 
\DeclareSymbolFontAlphabet{\mathbbl}{bbold}
\newcommand{\pdb}[1]{\langle{#1}\rangle}
\DeclareMathSymbol{A}{\mathalpha}{operators}{`A}
\DeclareMathSymbol{B}{\mathalpha}{operators}{`B}
\DeclareMathSymbol{C}{\mathalpha}{operators}{`C}
\DeclareMathSymbol{D}{\mathalpha}{operators}{`D}
\DeclareMathSymbol{E}{\mathalpha}{operators}{`E}
\DeclareMathSymbol{F}{\mathalpha}{operators}{`F}
\DeclareMathSymbol{G}{\mathalpha}{operators}{`G}
\DeclareMathSymbol{H}{\mathalpha}{operators}{`H}
\DeclareMathSymbol{I}{\mathalpha}{operators}{`I}
\DeclareMathSymbol{J}{\mathalpha}{operators}{`J}
\DeclareMathSymbol{K}{\mathalpha}{operators}{`K}
\DeclareMathSymbol{L}{\mathalpha}{operators}{`L}
\DeclareMathSymbol{M}{\mathalpha}{operators}{`M}
\DeclareMathSymbol{N}{\mathalpha}{operators}{`N}
\DeclareMathSymbol{O}{\mathalpha}{operators}{`O}
\DeclareMathSymbol{P}{\mathalpha}{operators}{`P}
\DeclareMathSymbol{Q}{\mathalpha}{operators}{`Q}
\DeclareMathSymbol{R}{\mathalpha}{operators}{`R}
\DeclareMathSymbol{S}{\mathalpha}{operators}{`S}
\DeclareMathSymbol{T}{\mathalpha}{operators}{`T}
\DeclareMathSymbol{U}{\mathalpha}{operators}{`U}
\DeclareMathSymbol{V}{\mathalpha}{operators}{`V}
\DeclareMathSymbol{W}{\mathalpha}{operators}{`W}
\DeclareMathSymbol{X}{\mathalpha}{operators}{`X}
\DeclareMathSymbol{Y}{\mathalpha}{operators}{`Y}
\DeclareMathSymbol{Z}{\mathalpha}{operators}{`Z}
\title[Lifting to $\BP{n-1}$ and H-dR degeneration]{Lifting to truncated Brown-Peterson spectra and Hodge-de Rham degeneration in characteristic $p>0$}
\author{S. K. Devalapurkar}
\address{1 Oxford St, Cambridge, MA 02139}
\email{sdevalapurkar@math.harvard.edu, \today}
\thanks{Part of this work was done when the author was supported by the PD Soros Fellowship and NSF DGE-2140743. I'm grateful to Ben Antieau, Bhargav Bhatt, Jeremy Hahn, and the referee for suggestions which improved this note.}
\begin{document}

\maketitle

\begin{abstract}
    The goal of this note is to prove that Hodge-de Rham degeneration holds for smooth and proper $\FF_p$-schemes $X$ with $\dim(X)<p^n$ assuming that two conditions hold: its category of quasicoherent sheaves admits a lift to the truncated Brown-Peterson spectrum $\BP{n-1}$; and the Hochschild-Kostant-Rosenberg spectral sequence for $X$ degenerates at the $E_2$-page. This result is obtained from a noncommutative version thereof, whose proof is essentially the same as Mathew's argument in \cite{mathew-kaledin}.
\end{abstract}

Let $X$ be a smooth and proper scheme over a perfect field $k$ of characteristic $p>0$. In \cite{deligne-illusie}, Deligne and Illusie proved that the Hodge decomposition holds for the de Rham cohomology of $X$ under certain hypotheses: namely, if $\dim(X)<p$ and $X$ admits a smooth and proper lift to the truncated Witt vectors $W_2(k) = W(k)/p^2$, they showed that the Hodge-de Rham spectral sequence
$$E_1^{\ast,\ast} = \H^\ast(X; \Omega^\ast_{X/k}) \Rightarrow \H^\ast_\dR(X/k)$$
collapses at the $E_1$-page.

In \cite[Remarque 2.6(iii)]{deligne-illusie} (see also \cite[Problem 7.10]{illusie-frob-and-hodge}), Deligne and Illusie asked if the Hodge-de Rham spectral sequence could degenerate for a smooth proper $k$-scheme $X$ with a lift to $W(k)/p^2$ (or even to $W(k)$), without any dimension assumptions. This remarkable question has recently been answered (in the negative) by Sasha Petrov in \cite{petrov-hdr}. Our goal in this note is to study conditions on $X$ arising from chromatic homotopy theory which \textit{do} guarantee Hodge-de Rham degeneration if $\dim(X)>p$.

\begin{recall}
Let $X$ be a smooth scheme over a commutative ring $k$. One then has the HKR and de-Rham-to-$\HP$ spectral sequences (see \cite[Definition 3.1]{antieau-bhatt-mathew-counterexample}):
\begin{align*}
    E_2^{s,t} = \H^s(X; \wedge^{-t} L_{X/k}) & \Rightarrow \pi_{-(s+t)} \HH(X/k), \\
    E_2^{s,t} = \H^{s-t}_\dR(X/k) & \Rightarrow \pi_{-(s+t)} \HP(X/k).
\end{align*}
There are also the Hodge-de Rham and the Tate spectral sequences
\begin{align*}
    E_1^{s,t} = \H^s(X; \wedge^{t} L_{X/k}) & \Rightarrow \H^{s+t}_\dR(X/k), \\
    E_2^{s,t} = \hat{\H}^s(BS^1; \pi_t \HH(\cC/\FF_p)) & \Rightarrow \pi_{t-s} \HP(\cC/\FF_p),
\end{align*}
where $\hat{\H}$ denotes Tate cohomology. Note that if we write $\H^\ast(BS^1; \FF_p) = \FF_p[\hbar]$ with $\hbar$ in cohomological degree $2$, then the $E_2$-page of the Tate spectral sequence can be rewritten as $\pi_\ast \HH(\cC/\FF_p)[\hbar^{\pm 1}]$.
\end{recall}
Let $n \leq \infty$. Fix an $\E{3}$-form of the ($p$-completed) truncated Brown-Peterson spectrum $\BP{n-1}$ of height $n-1$, which exists thanks to \cite[Theorem A]{hahn-wilson-bpn}. By construction, $\pi_\ast \BP{n-1} \cong \Z_p[v_1, \cdots, v_{n-1}]$ for classes $v_i$ in degree $2p^i-2$. By convention, $\BP{-1} = \FF_p$. We also have $\BP{0} = \Z_p$, and $\BP{1}$ can be identified with the connective cover of the Adams summand of $p$-completed complex K-theory. There is also a tight relationship between $\BP{2}$ and elliptic cohomology. When $n = \infty$, the $\E{3}$-ring $\BP{\infty}$ is denoted $\BPP$, and is called the Brown-Peterson spectrum.

Our goal in this note is to prove:
\begin{theorem}\label{hdr-degen}
Let $n \leq \infty$, and let $X$ be a smooth and proper scheme over\footnote{Here, $\FF_p$ could be replaced by any perfect field of characteristic $p>0$; we only use $\FF_p$ to avoid introducing conceptually unnecessary notation.} $\FF_p$ of dimension $<p^n$. Suppose that:
\begin{enumerate}
    \item The HKR spectral sequence degenerates at the $E_2$-page; and
    \item $\QCoh(X)$ lifts to a smooth and proper left $\BP{n-1}$-linear $\infty$-category\footnote{Recall that at the beginning of this article, we picked an $\E{3}$-form of $\BP{n-1}$, which exists by \cite[Theorem A]{hahn-wilson-bpn}. Then, a ``left $\BP{n-1}$-linear $\infty$-category'' is simply a left $\LMod_{\BP{n-1}}$-module in $\mathrm{Pr^L}$, where $\LMod_{\BP{n-1}}$ is equipped with the $\E{2}$-monoidal structure arising from the $\E{3}$-structure on $\BP{n-1}$. See \cite[Variant D.1.5.1]{SAG}.}.
\end{enumerate}
Then the Hodge-de Rham spectral sequence
$$E_1^{\ast,\ast} = \H^\ast(X; \Omega^\ast_{X/\FF_p}) \Rightarrow \H^\ast_\dR(X/\FF_p)$$
collapses at the $E_1$-page, and the de-Rham-to-$\HP$ spectral sequence collapses at the $E_2$-page.
\end{theorem}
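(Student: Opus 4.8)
The plan is to deduce \Cref{hdr-degen} from a noncommutative degeneration statement and then prove the latter by transplanting Mathew's argument from \cite{mathew-kaledin}, with the $\E{3}$-ring $\BP{n-1}$ in the structural role that $\Z_p$ plays there. For the reduction, put $\cC=\QCoh(X)$. Since Hochschild homology of a compactly generated category depends only on its compact objects, $\HH(\cC/\FF_p)\simeq\HH(X/\FF_p)$, whose homotopy groups vanish in degrees of absolute value $>\dim X$ — already the $E_2$-page of the HKR spectral sequence does, because $\wedge^t L_{X/\FF_p}$ and $\H^s(X;-)$ vanish for $s,t>\dim X$ — and $\dim X<p^n$ by hypothesis. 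Granting (1), the resulting identification $\pi_\ast\HH(X/\FF_p)\cong\bigoplus_{s,t}\H^s(X;\wedge^t L_{X/\FF_p})$ is compatible with the relevant ($\hbar$-adic) filtrations, and the comparison recalled above together with \cite[Definition 3.1]{antieau-bhatt-mathew-counterexample} shows that, under (1), degeneration of the Hodge-de Rham spectral sequence at $E_1$ and of the de-Rham-to-$\HP$ spectral sequence at $E_2$ are both equivalent to degeneration at $E_2$ of the Tate spectral sequence
$$\pi_\ast\HH(\cC/\FF_p)[\hbar^{\pm1}]\ \Longrightarrow\ \pi_\ast\HP(\cC/\FF_p),$$
i.e., to the numerical identity $\rank_{\FF_p\ls{\hbar}}\pi_\ast\HP(\cC/\FF_p)=\dim_{\FF_p}\pi_\ast\HH(\cC/\FF_p)$. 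So it suffices to prove the noncommutative statement: \emph{if $\cd$ is a smooth and proper $\FF_p$-linear $\infty$-category with $\pi_i\HH(\cd/\FF_p)=0$ for $|i|\ge p^n$ which lifts to a smooth and proper left $\BP{n-1}$-linear $\infty$-category, then the corresponding Tate spectral sequence for $\cd$ degenerates at $E_2$.}

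For the noncommutative statement, write $R=\BP{n-1}$ with its chosen $\E{3}$-structure, so $\LMod_R$ is $\E{2}$-monoidal and ``left $R$-linear $\infty$-category'' is as in \cite[Variant D.1.5.1]{SAG}; let $\tilde\cd$ be a lift of $\cd$. Base change for Hochschild homology of module categories gives $\HH(\cd/\FF_p)\simeq\HH(\tilde\cd/R)\otimes_R\FF_p$, where $\FF_p$ is obtained from $R$ by killing the regular sequence $(p,v_1,\dots,v_{n-1})$; carrying along the $\THH(R)$-module structure on $\HH(\tilde\cd/R)$ and the lax monoidality of $(-)^{tS^1}$ exhibits $\HP(\cd/\FF_p)=\HH(\cd/\FF_p)^{tS^1}$ as a module over $\THH(R)^{tS^1}=\TP(R)$, and the Tate spectral sequence for $\cd$ as a spectral sequence of modules over the Tate spectral sequence for $\TP(R)$. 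One now runs Mathew's counting argument: $\rank\pi_\ast\HP(\cd/\FF_p)\le\dim\pi_\ast\HH(\cd/\FF_p)$ is automatic from convergence, and for equality one must kill all differentials; since $\pi_\ast\HH(\cd/\FF_p)$ is supported in a window of fewer than $2p^n$ consecutive degrees, only differentials $d_r$ with $r\lesssim 2p^n$ can be nonzero, and by the module structure these vanish provided the Tate spectral sequence for $\TP(R)$ carries no differentials of length $\lesssim 2p^n$. For $n=1$ the latter is Bökstedt's computation of $\THH(\BP{0})=\THH(\Z_p)$ (which is $p$-locally concentrated in even degrees through degree $2p-2$), exactly as in \cite{mathew-kaledin}; for general $n$ the requisite structural input on $\THH(\BP{n-1})$ through degree $\sim 2p^n$ — the appropriate evenness or vanishing-line statement, reflecting that the first genuine obstruction sits in the ``$v_n$-range'' around $|v_n|=2p^n-2$, beyond the window in which $\HH(\cd/\FF_p)$ lives — is supplied by the Segal conjecture and canonical vanishing for $\BP{n-1}$ proved in \cite{hahn-wilson-bpn}. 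Feeding this back through the reduction proves \Cref{hdr-degen}.

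The step I expect to fight with is this last input: identifying exactly which consequence of \cite{hahn-wilson-bpn} Mathew's counting argument consumes, and verifying that it upgrades the admissible range from $\dim<p$ to $\dim<p^n$ and not merely to something weaker. The naive comparison $\THH(\FF_p)\to\THH(\FF_p/\BP{n-1})$, whose target is even, already fails to be an equivalence past degree $2p$ regardless of $n$, so the improvement genuinely uses the finer structure of $\THH(\BP{n-1})$ (or of its $\TP$, or of its reduction mod $(p,v_1,\dots,v_{n-1})$), and it takes care to see that the odd classes in $\THH(\BP{n-1})$ below degree $2p^n-1$ — e.g.\ the Bökstedt-type class in degree $2p-1$ — do not obstruct the argument once the category has been lifted all the way to $\BP{n-1}$. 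A secondary, bookkeeping issue is that $\BP{n-1}$ carries only an $\E{3}$-structure, so one must check that $\THH$ of $R$-linear categories with its $S^1$-action, the base-change equivalence $\HH(\cd/\FF_p)\simeq\HH(\tilde\cd/R)\otimes_R\FF_p$, and the $\TP(R)$-module structure on $\HP(\cd/\FF_p)$ all make sense with only the $\E{2}$-monoidal structure on $\LMod_R$; this is precisely the framework of \cite[Variant D.1.5.1]{SAG}.
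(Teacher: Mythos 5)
Your high-level reduction is right and matches the paper: put $\cC = \QCoh(X)$, use condition (a) together with \cite[Remark 3.6]{antieau-bhatt-mathew-counterexample} (not just Definition 3.1) to trade Hodge--de Rham and de-Rham-to-$\HP$ degeneration for degeneration of the Tate spectral sequence for $\HP(\cC/\FF_p)$, and note the window constraint $\pi_j\HH(\cC/\FF_p)=0$ for $|j|>\dim X<p^n$. That is precisely \Cref{higher-dim-degen}. But the argument you sketch for the noncommutative statement has a real gap, and you have in fact put your finger on it yourself.

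The difficulty you flag at the end --- that $\THH(\BP{n-1})$ is \emph{not} even through degree $2p^n-1$, because (modulo the regular sequence) it contains odd classes $\sigma(t_i)$ in degree $2p^i-1$ for every $i\le n$ --- is not a bookkeeping worry you can defer; it is fatal to the ``evenness/vanishing line of $\THH(\BP{n-1})$'' mechanism you propose. The Segal conjecture and canonical vanishing from \cite{hahn-wilson-bpn} are not the right input here and the paper does not use them; the only thing it takes from \cite{hahn-wilson-bpn} is the existence of an $\E{3}$-form of $\BP{n-1}$. Likewise the claim that the Tate spectral sequence for $\HP(\cC/\FF_p)$ is a module over the Tate spectral sequence for $\TP(R)$ does not immediately imply anything about vanishing of differentials (free module structures or a Leibniz argument would be needed, and moreover $(\THH(R)\otimes_R\FF_p)^{tS^1}$ is not $\TP(R)$, so the multiplicative comparison is not even set up correctly).

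What the paper actually does after the reduction is follow Mathew's \cite[Propositions 3.4, 3.7, 3.8]{mathew-kaledin} much more closely: degeneration of the Tate spectral sequence is equivalent to $\sigma$-torsionfreeness of $\THH(\cC)$ as a module over $\THH(\FF_p)\simeq\FF_p[\sigma]$, which is proved by showing that a perfect $\THH(\FF_p)$-module of Tor-amplitude $[-p^n,p^n]$ that lifts to a perfect $\THH(\BP{n-1})$-module must be free (\Cref{freeness}, \Cref{injective-sigma}). The key technical input, which your proposal is missing and which replaces the evenness you were hoping for, is \Cref{prop: map factors through Fp}: the $\E{2}$-algebra map $\tau_{\le 2p^n-1}\THH(\BP{n-1})\to\tau_{\le 2p^n-1}\THH(\FF_p)$ factors through $\FF_p$. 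It is the \emph{factorization of this map}, not the triviality of $\THH(\BP{n-1})$, that neutralizes the odd classes you were worried about. Its proof is genuinely different in flavor from anything in your sketch: it reduces along $\BPP\otimes_\BPP\FF_p \to \BP{n-1}\otimes_{\BP{n-1}}\FF_p$ (an equivalence through degree $2p^n-1$ by \cite[Proposition 2.9]{thh-truncated-bp}), then replaces $\BPP$ by $\MU$ using the $\E{4}$-retract, and finally uses the Thom spectrum identification $\THH(\MU)\simeq\MU[\SU]$ of \cite{thh-thom,klang} to reduce to the statement that any $\E{3}$-map $\SU\to X$ to an $\E{3}$-space with even homotopy is null, because $\B^3\SU=\BU\pdb{6}$ has an even cell decomposition. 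None of this --- neither the reduction to a module-freeness statement, nor the factorization lemma, nor the Thom-spectrum and $\BU\pdb{6}$ input that proves it --- appears in your outline, and without it the argument does not close.
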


The discussion in \cite[Remark 3.6]{antieau-bhatt-mathew-counterexample} implies that if the HKR and Tate spectral sequences both degenerate, then both the Hodge-de Rham and de Rham-to-$\HP$ spectral sequences must also degenerate. It therefore suffices to prove the following noncommutative statement\footnote{Our original proof used the higher chromatic topological Sen operators from our forthcoming article \cite{thh-xn} to argue in a manner similar to \cite[Example 4.7.17]{apc}, but we soon realized that the argument could be simplified much further. In \cite[Remark C.14]{thh-xn}, we also phrase an analogue of \cref{higher-dim-degen} in stacky language via the Sen operator of \cite{apc} and the stack $BW^\times[F^n]$. The expected isomorphism, which we hope to study in joint work with Jeremy Hahn and Arpon Raksit, between $BW^\times[F^n]$ and the stack associated to the motivic filtration of $\THH(\BP{n-1})^{t\Cp}/(p, \cdots,v_{n-1})$ was the original motivation for our result.}:
\begin{prop}\label{higher-dim-degen}
Let $n\leq \infty$, and let $\cC$ be a smooth and proper $\FF_p$-linear $\infty$-category such that $\pi_j \HH(\cC/\FF_p) = 0$ for $j\not\in [-p^n,p^n]$. If $\cC$ lifts to a smooth and proper left $\BP{n-1}$-linear $\infty$-category, then the Tate spectral sequence
$$E_2^{\ast,\ast} = \hat{\H}^\ast(BS^1; \pi_\ast \HH(\cC/\FF_p)) \Rightarrow \pi_\ast \HP(\cC/\FF_p)$$
collapses at the $E_2$-page.
\end{prop}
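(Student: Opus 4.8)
The plan is to imitate Mathew's proof of Kaledin's theorem from \cite{mathew-kaledin}, replacing the role of $\Sone$-equivariance over $\FF_p$ with $\Sone$-equivariance over $\BP{n-1}$, and replacing the "$p$ is big compared to the dimension" input with "$p^n$ is big compared to the dimension." The key structural fact I would exploit is that the Tate spectral sequence for $\cC$ is a module over the Tate spectral sequence for the unit, i.e. over $\hat{\H}^\ast(BS^1;\FF_p) = \FF_p[\hbar^{\pm 1}]$, so degeneration is equivalent to the vanishing of the first possibly-nonzero differential, which (by the module structure and the bidegree bookkeeping) is detected by a single $\hbar$-linear map. So the whole problem reduces to producing enough "room" for a Frobenius-type or Bott-type periodicity operator coming from $\BP{n-1}$ to kill that differential, exactly as the Bökstedt periodicity / Frobenius argument does in characteristic $p$ for $\dim < p$.

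Concretely, the steps I would carry out are: \textbf{(1)} Let $\wh{\cC} = \LMod_{\BP{n-1}}$-linear lift of $\cC$; form $\HH(\wh{\cC}/\BP{n-1})$ and its periodic/Tate version $\HP(\wh{\cC}/\BP{n-1}) := \HH(\wh{\cC}/\BP{n-1})^{t\Sone}$. Base change along $\BP{n-1}\to \FF_p$ recovers $\HH(\cC/\FF_p)$ and $\HP(\cC/\FF_p)$; this is the $\BP{n-1}$-linear analogue of the fact that Hochschild homology commutes with base change, using smoothness and properness to control the interaction with the Tate construction (properness gives that $\HH(\cC/\FF_p)$ is perfect over $\FF_p$, which is what makes $(-)^{t\Sone}$ behave). \textbf{(2)} Run the "homotopy fixed point" or "motivic" filtration on $\HH(\wh{\cC}/\BP{n-1})$ coming from the Postnikov/Whitehead tower of $\BP{n-1}$ itself: the associated graded is built out of $\HH(\cC/\FF_p)$ tensored with the polynomial generators $v_1,\dots,v_{n-1}$ in degrees $2p^i - 2$, together with the base $\FF_p$-linear data. \textbf{(3)} The crucial numerics: because $\HH(\cC/\FF_p)$ is concentrated in degrees $[-p^n, p^n]$, the potentially-nonzero differentials in the Tate spectral sequence have length bounded in terms of $2p^n$, while the classes $v_i$ (in particular $v_{n-1}$ in degree $2p^{n-1}-2$, and more relevantly the way they assemble up to height $n-1$) supply periodicity operators whose existence forces those differentials to vanish — the point being that a differential which is both $\hbar$-linear and $v_i$-linear, and which lands in a range made inaccessible by the degree bound, must be zero. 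This is the step that is essentially identical to counting in \cite{mathew-kaledin} once the right grading is set up, and it is why the hypothesis is $\dim(X) < p^n$ rather than $< p$.

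\textbf{The main obstacle} I expect is \textbf{(2)}: setting up the filtration on $\HH(\wh{\cC}/\BP{n-1})$ and identifying its associated graded cleanly enough that the degree bookkeeping in \textbf{(3)} goes through. One has to be careful that $\BP{n-1}$ is only an $\E{3}$-ring, so $\LMod_{\BP{n-1}}$ is $\E{2}$-monoidal — this is exactly enough structure to talk about $\BP{n-1}$-linear $\infty$-categories and their Hochschild homology with its $\Sone$-action and its $S^1$-Tate construction, but one must avoid invoking $\E{\infty}$-structure anywhere (e.g. one cannot form $\THH$ relative to a non-$\E{\infty}$ base in the naive way, so the argument must stay on the level of $\HH$ of a linear category rather than $\THH$ of a ring). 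Granting that the filtration and its associated graded are as expected, the spectral sequence comparison and the numerical vanishing argument should then follow Mathew's argument essentially verbatim, with $p$ replaced by $p^n$ throughout. I would also need the input from \cite[Remark 3.6]{antieau-bhatt-mathew-counterexample}, already cited, to pass from this noncommutative Tate-degeneration statement back to the commutative Hodge--de Rham and de Rham-to-$\HP$ statements of \cref{hdr-degen}.
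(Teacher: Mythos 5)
Your high-level plan — reduce degeneration to showing that the Bökstedt class $\sigma \in \pi_2 \THH(\FF_p)$ acts torsion-freely on $\THH(\cC)$, and extract torsion-freeness from the lift plus the degree bound — is the right strategy, and you correctly identify Mathew's Proposition 3.4 and the classification of perfect $\THH(\FF_p)$-modules as the scaffolding. But your steps (2) and (3) diverge from the mechanism that actually carries the argument, and I do not think they would close as stated.

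The paper never forms relative Hochschild homology $\HH(\wh{\cC}/\BP{n-1})$ and does not run a Postnikov or motivic filtration on it; it works throughout with absolute $\THH$ over the sphere, which is defined for $\E{1}$-rings and stable $\infty$-categories with no base-change subtlety and which carries the Bökstedt class canonically. The genuinely new input — missing from your sketch, and the crux of the whole proof — is \cref{prop: map factors through Fp}: the $\E{2}$-algebra map $\tau_{\leq 2p^n-1}\THH(\BP{n-1}) \to \tau_{\leq 2p^n-1}\THH(\FF_p)$ factors through $\FF_p$. This is proved by reducing along $\BPP \to \BP{n-1}$ (the additive computation of $\pi_\ast(\THH(\BP{n-1})\otimes_{\BP{n-1}}\FF_p)$ supplies the range $2p^n-1$), then along $\BPP$ being an $\E{4}$-retract of $\MU$, and finally by using the Thom spectrum identification $\THH(\MU) \simeq \MU[\SU]$ to recast the question as showing that every $\E{3}$-map $\SU \to \GL_1(\THH(\FF_p))$ is null; this follows because $\B^3\SU = \BU\pdb{6}$ has an even cell structure while $\B^3 \GL_1(\THH(\FF_p))$ has odd homotopy. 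That factorization is what makes $\tau_{\leq 2p^n-1} M$ free over $\tau_{\leq 2p^n-1}\THH(\FF_p)$ for any connective perfect $\THH(\FF_p)$-module $M$ that lifts to $\THH(\BP{n-1})$ (\cref{injective-sigma}), which in turn gives the $\sigma$-torsion-freeness in the window $[-p^n,p^n]$ (\cref{freeness}) and hence degeneration.

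By contrast, the mechanism you propose in step (3) — that the $v_i$ supply ``periodicity operators'' which kill differentials — is not what happens in either Mathew's argument or this one; the $v_i$ enter only through the degree count in \cref{prop: map factors through Fp}. And step (2) as stated (a Postnikov filtration on $\HH(\wh{\cC}/\BP{n-1})$ whose associated graded is $\HH(\cC/\FF_p)$ tensored with polynomial $v_i$'s) is a nontrivial assertion that you would need to both construct and control; it is not obviously a bounded-length spectral sequence, and it interacts delicately with the $\E{2}$-monoidality issue you correctly flag. The paper sidesteps all of this by staying in the absolute $\THH$ world, where the only structural inputs needed are the module structure over $\THH(\BP{n-1})$ and the factorization lemma.
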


\begin{remark}
When $n=1$, \cref{hdr-degen} is part of the main result of \cite{deligne-illusie}\footnote{As the reader may have noticed, the title of this work is a tribute to the inspirational paper \cite{deligne-illusie}.}: in this case, condition (b) in \cref{hdr-degen} is asking for a lifting to $\BP{0} = \Z_p$. As mentioned above, Sasha Petrov recently constructed in \cite{petrov-hdr} a $(p+1)$-dimensional smooth and proper $\Z_p$-scheme $\fr{X}$ such that the Hodge-de Rham spectral sequence for its special fiber $\fr{X}_{p=0}$ does not degenerate at the $E_1$-page. If the HKR spectral sequence degenerates at the $E_2$-page for Petrov's $\fr{X}_{p=0}$, then $\QCoh(\fr{X})$ provides an example of a $\Z_p$-linear $\infty$-category which cannot lift to a $\ku$-linear $\infty$-category.

We view \cref{hdr-degen} as a step towards a positive answer of Deligne and Illusie's question in some generality. Note that condition (b) in \cref{hdr-degen} is significantly weaker than asking that $X$ itself admit some sort of lifting as a spectral scheme. Note, also, that we do not prove anything nearly as refined as \cite{deligne-illusie}: namely, we do not provide any sort of correspondence between liftings and splittings of truncations of the de Rham complex. For instance, it would be very interesting if, for a $\Z_p$-scheme $\fr{X}$, there were a relationship between splittings of the mod $p$ reduction $\diffr_{\fr{X},0} \otimes_{\Z_p} \FF_p$ of the zeroth generalized eigenspace of the diffracted Hodge complex (see \cite[Remark 4.7.20]{apc} for this notion) and liftings of $\QCoh(\fr{X})$ to $\BP{1}$.
\end{remark}

\begin{remark}
Let $I = (p^2, v_1^2, \cdots, v_{n-1}^2)$. Were $\BP{n-1}/I$ to admit the structure of an $\E{2}$-ring, \cref{hdr-degen} (and \cref{higher-dim-degen}) would continue to hold with $\BP{n-1}$ replaced by $\BP{n-1}/I$. This is because one can prove that \cref{injective-sigma} continues to hold for $\BP{n-1}/I$.

Some preliminary calculations seem to suggest that Petrov's first Sen class (see \cite{petrov-hdr, illusie-petrov-slides}) is related to the obstruction in Hochschild cohomology to lifting a $\Z_p$-scheme $\fr{X}$ along the map $\BP{1}/v_1^2 \to \Z_p$ (and even along the map $\tau_{\leq 2p-3} j \to \Z_p$, where $j$ is the connective complex image-of-$J$ spectrum). For instance, the first $k$-invariant of $\BP{1}/v_1^2$ is given by the map $\Z_p \to \Z_p[2p-1]$ defined via the composite
$$\Z_p \to \FF_p \xar{P^1} \FF_p[2p-2] \xar{\beta} \Z_p[2p-1],$$
where $P^1$ is a Steenrod operation and $\beta$ is the Bockstein. In other words, $\BP{1}/v_1^2$ is equivalent to the fiber of the above composite.
On the other hand, the extension class for $\co_\fr{X} \to \F^p \diffr_{\fr{X},0} \to L\Omega^p_\fr{X}[-p]$ is computed in \cite[Lemma 6.5]{petrov-hdr} to be the composite
$$L\Omega^p_\fr{X}[-p] \to L\Omega^p_{\fr{X}_{p=0}/\FF_p} [-p] \xar{c_{X,p}} \co_{\fr{X}_{p=0}} \xar{\beta} \co_\fr{X}[1],$$
where the ``first Sen class'' $c_{X,p}$ can be defined using Steenrod operations on cosimplicial algebras via \cite[Theorem 7.1]{petrov-hdr}.
We hope to explore this further to obtain a tighter connection between the results in this article and those of Petrov's.
\end{remark}
\begin{remark}
\cref{hdr-degen} has the following counter-intuitive consequence: if the HKR spectral sequence for $X$ degenerates at the $E_2$-page, then the differentials in the Hodge-de Rham spectral sequence obstruct the lifting of $\QCoh(X)$ to a smooth and proper left $\BP{n-1}$-linear $\infty$-category.
In particular, taking $n = \infty$, the condition in \cref{higher-dim-degen} that $\pi_j \HH(\cC/\FF_p) = 0$ for $j\not\in [-p^n,p^n]$ is vacuous; so we find that if $\cC$ is a smooth and proper $\FF_p$-linear $\infty$-category which admits a smooth and proper lift to $\BPP$, then its Tate spectral sequence collapses at the $E_2$-page. 

This was already known if $\cC$ lifts all the way to $S^0$; see \cite[Example 3.5]{mathew-kaledin}. In particular, therefore, one class of $X$ for which $\QCoh(X)$ does satisfy the hypotheses of \cref{higher-dim-degen} and \cref{hdr-degen} are toric varieties; but in those cases, degeneration was already known for $X$ of arbitrary dimension (since they are $F$-liftable).
Interesting examples of \cref{hdr-degen} and \cref{higher-dim-degen} are currently lacking, but one would be most welcome.
\end{remark}
\begin{remark}
One could also ask the following question: if $n\geq 0$, is there an example of a smooth and proper $\BP{n-1}$-linear $\infty$-category $\cC$ which does not lift to a smooth and proper left $\BP{n}$-linear $\infty$-category?
\end{remark}
The idea to prove \cref{higher-dim-degen} is essentially the argument of \cite{mathew-kaledin}, so we recommend reading that paper first. Recall B\"okstedt's calculation that $\pi_\ast \THH(\FF_p) \cong \FF_p[\sigma]$, where $\sigma$ lives in degree $2$. By \cite[Proposition 3.4]{mathew-kaledin}, \cref{higher-dim-degen} is a consequence of:
\begin{prop}\label{sigma-tf}
Let $\cC$ be a smooth and proper $\FF_p$-linear $\infty$-category such that $\pi_j \HH(\cC/\FF_p) = 0$ for $j\not\in [-p^n,p^n]$. If $\cC$ lifts to a smooth and proper left $\BP{n-1}$-linear $\infty$-category, then $\THH(\cC)$ is $\sigma$-torsionfree.
\end{prop}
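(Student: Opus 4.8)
The plan is to run the argument of \cite{mathew-kaledin} with the sphere spectrum replaced by $\BP{n-1}$, using the dimension bound to kill a filtration obstruction downstairs. Fix a smooth and proper left $\BP{n-1}$-linear lift $\widetilde{\cC}$ of $\cC$, so $\cC \simeq \widetilde{\cC} \otimes_{\LMod_{\BP{n-1}}} \LMod_{\FF_p}$. The first step is the base-change formula for topological Hochschild homology: since $\THH$ of stable $\infty$-categories is symmetric monoidal and colimit-preserving and $\THH(\LMod_R) \simeq \THH(R)$, there is a natural equivalence
$$\THH(\cC) \;\simeq\; \THH(\widetilde{\cC}) \otimes_{\THH(\BP{n-1})} \THH(\FF_p),$$
and $\THH(\widetilde{\cC})$ is a \emph{perfect} $\THH(\BP{n-1})$-module because $\widetilde{\cC}$ is dualizable in $\Cat^{\perf}_{\BP{n-1}}$ and $\THH$ preserves dualizable objects. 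Naturality of the unit map shows that $p,v_1,\dots,v_{n-1}$ act null-homotopically on the $\THH(\BP{n-1})$-module $\THH(\FF_p)$; hence $\THH(\FF_p)$ is a module over $R := \THH(\BP{n-1})/(p,v_1,\dots,v_{n-1})$, and the displayed equivalence refines to $\THH(\cC) \simeq \bar{M} \otimes_R \THH(\FF_p)$ with $\bar{M} := \THH(\widetilde{\cC}) \otimes_{\THH(\BP{n-1})} R$ a perfect $R$-module.

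Next I would reduce modulo the Bökstedt class $\sigma$, which identifies $\bar{M} \otimes_R \FF_p \simeq \THH(\cC)/\sigma \simeq \HH(\cC/\FF_p)$; by hypothesis this has homotopy concentrated in $[-p^n,p^n]$. Since $R$ is connective with $\pi_0 R \cong \FF_p$, I would put $\bar{M}$ in a minimal cell structure over $R$: a finite filtration with associated graded $\bigoplus_j \Sigma^j R^{\oplus m_j}$, where $m_j = \dim_{\FF_p}\pi_j(\bar{M}\otimes_R\FF_p)$ and every attaching map lies in the augmentation ideal. Then $m_j \ne 0$ only for $j \in [-p^n,p^n]$, so $\bar{M}$ is built from cells $\Sigma^j R$ with $j \in [-p^n,p^n]$, and each component $\Sigma^{a-1}R \to \Sigma^b R$ of an attaching map between such cells is a class in $\pi_{a-1-b}R$ with $1 \le a-1-b \le 2p^n - 1$.

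Now the key input, \cref{injective-sigma}: the map $\pi_\ell R \to \pi_\ell \THH(\FF_p)$ vanishes for $0 < \ell < 2p^n$. (Concretely $\pi_\ast R \cong \FF_p[\mu_n]\otimes\Lambda(\lambda_1,\dots,\lambda_n)$ with $|\mu_n| = 2p^n$ and $|\lambda_i| = 2p^i-1$, the $\lambda_i$ map to $0$ in $\THH(\FF_p) \simeq \FF_p[\sigma]$, and powers of $\mu_n$ occupy only multiples of $2p^n$; for $\ell$ odd the statement is vacuous.) As every attaching-map component of $\bar{M}$ lives in precisely the range $0 < a-1-b < 2p^n$, all attaching maps become null-homotopic after applying $-\otimes_R \THH(\FF_p)$; a short induction over skeleta (each previous skeleton having already split off) then shows the filtration of $\bar{M}\otimes_R\THH(\FF_p)$ splits. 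Thus $\THH(\cC) \simeq \bigoplus_j \Sigma^j \THH(\FF_p)^{\oplus m_j}$ is free over $\THH(\FF_p) \simeq \FF_p[\sigma]$, hence $\sigma$-torsionfree.

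The one genuinely substantive ingredient is \cref{injective-sigma}; this is exactly where the height of $\BP{n-1}$ is matched against the bound $\dim < p^n$, since the window $[-p^n,p^n]$ has width exactly $2p^n = |\mu_n|$ — just small enough for every attaching map to die after base change. Everything else is formal, the only further point needing a little care being the (standard) multiplicativity and base-change properties of $\THH$ for $\infty$-categories linear over the merely $\E{3}$-ring $\BP{n-1}$ (cf.\ \cite[Variant D.1.5.1]{SAG}).
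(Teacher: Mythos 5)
Your overall strategy — base-change along $\BP{n-1} \to \FF_p$, reduce to $R := \THH(\BP{n-1})\otimes_{\BP{n-1}}\FF_p$, build a minimal cell structure on $\bar{M}$ with cells in degrees $[-p^n,p^n]$, and show the attaching maps die after $-\otimes_R\THH(\FF_p)$ — is a reasonable variant of the paper's argument, and it correctly isolates where the dimension bound meets the height of $\BP{n-1}$. The paper instead classifies perfect $\THH(\FF_p)$-modules as sums of free modules and $\Sigma^i\THH(\FF_p)/\sigma^j$ (\cref{freeness}), and rules out the torsion summands using the injectivity statement in \cref{injective-sigma}; your approach would, if complete, give the freeness more directly without invoking that classification.

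However, the central step has a gap: the vanishing of $\pi_\ell R \to \pi_\ell \THH(\FF_p)$ for $0 < \ell < 2p^n$ is \emph{not} what makes all attaching maps vanish after base change, and you have also mislabelled the input — what you describe is (a weak consequence of) \cref{prop: map factors through Fp}, not \cref{injective-sigma}. The issue is that the attaching map of the degree-$a$ cell is a class $\alpha_a\in\pi_{a-1}\bar{M}^{<a}$, not an element of $\pi_\ast R$. After tensoring, $\alpha_a$ contributes an element of $\pi_{a-1}\bigl(\bar{M}^{<a}\otimes_R\THH(\FF_p)\bigr) \cong \bigoplus_{b<a}\pi_{a-1-b}\THH(\FF_p)^{m_b}$, and only its leading component (the projection to the top cell of $\bar{M}^{<a}$) is manifestly the image of a class in $\pi_\ast R$. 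The deeper components are Toda-bracket--type secondary operations, and minimality of $\alpha_a$ (i.e.\ that it dies in $\bar{M}^{<a}\otimes_R\FF_p$) does not force them to vanish in $\bar{M}^{<a}\otimes_R\THH(\FF_p)$, since the forgetful map $\pi_{a-1-b}\THH(\FF_p)\to\pi_{a-1-b}\FF_p$ is far from injective. What actually closes this gap is the assertion that $\tau_{\leq 2p^n-1}\THH(\BP{n-1}) \to \tau_{\leq 2p^n-1}\THH(\FF_p)$ factors through $\FF_p$ \emph{as a map of $\E{2}$-rings}; this is \cref{prop: map factors through Fp}, and it is the hard technical input of the argument (proved via $\THH(\MU)\simeq \MU[\SU]$, the even cell structure of $\BU\pdb{6}$, and $\E{3}$-maps of Thom spectra). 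That multiplicative factorization is strictly stronger than the homotopy-group vanishing you verify from the formula $\pi_\ast R\cong\FF_p[\mu_n]\otimes\Lambda(\lambda_1,\dots,\lambda_n)$, and only the multiplicative statement guarantees that in the relevant range $\bar{M}\otimes_R\THH(\FF_p)$ is extended from $\bar{M}\otimes_R\FF_p$, which is what kills \emph{all} the attaching maps, not just the primary ones.
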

To prove \cref{sigma-tf}, we need a preliminary result. It follows from \cite[Theorem 5.2 and Corollary 2.8]{framed-e2} that there is an augmentation $\THH(\BP{n-1}) \to \BP{n-1}$; composing with the map $\BP{n-1} \to \FF_p$ defines a map $\THH(\BP{n-1}) \to \FF_p$.
\begin{prop}\label{prop: map factors through Fp}
    The map $\tau_{\leq 2p^n-1} \THH(\BP{n-1}) \to \tau_{\leq 2p^n-1}\THH(\FF_p)$ factors, as an $\E{2}$-algebra map, as the composite 
    $$\tau_{\leq 2p^n - 1} \THH(\BP{n-1}) \to \FF_p \to \tau_{\leq 2p^n-1} \THH(\FF_p).$$
\end{prop}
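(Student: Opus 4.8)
The plan is to reduce, by base change along $\BP{n-1}\to\FF_p$, to a comparison of $\E{2}$-$\FF_p$-algebra maps, and then to control that comparison by $\E{2}$-André--Quillen obstruction theory against the (entirely even) Postnikov tower of $\THH(\FF_p)$.

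\smallskip
\emph{Reduction.} By B\"okstedt's theorem $\pi_\ast\THH(\FF_p)\cong\FF_p[\sigma]$ with $\lvert\sigma\rvert=2$, so $\tau_{\leq 2p^n-1}\THH(\FF_p)$ has homotopy concentrated in the even degrees $0,2,\dots,2p^n-2$. Write $g$ for the map in the statement and $h$ for the composite $\THH(\BP{n-1})\xrightarrow{\mathrm{aug}}\BP{n-1}\to\FF_p\to\THH(\FF_p)$. Both are $\E{2}$-algebra maps which are moreover $\BP{n-1}$-linear (for $h$, because the augmentation $\THH(\BP{n-1})\to\BP{n-1}$ is split by the unit, hence $\BP{n-1}$-linear), and their target is an $\FF_p$-module, so each factors through the base-change map $q\colon\THH(\BP{n-1})\to Z:=\FF_p\otimes_{\BP{n-1}}\THH(\BP{n-1})$. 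One thus obtains $\E{2}$-$\FF_p$-algebra maps $\bar g,\bar h\colon Z\to\THH(\FF_p)$ with $\bar h=(\text{unit})\circ\bar\epsilon$ for the $\pi_0$-isomorphism $\bar\epsilon\colon Z\to\FF_p$; since $\FF_p$ is initial in $\CAlg_{\FF_p}$, it suffices to prove that $\tau_{\leq 2p^n-1}\bar g$ factors through $\pi_0=\FF_p$ as an $\E{2}$-$\FF_p$-algebra map (it then agrees with $\tau_{\leq 2p^n-1}\bar h$, and one precomposes with $\tau_{\leq 2p^n-1}q$).

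\smallskip
\emph{Input.} The computation one needs is
\[
\pi_\ast\bigl(\FF_p\otimes_{\BP{n-1}}\THH(\BP{n-1})\bigr)\;\cong\;\Lambda_{\FF_p}(\lambda_1,\dots,\lambda_n)\otimes_{\FF_p}\FF_p[\mu_n],\qquad \lvert\lambda_i\rvert=2p^i-1,\quad\lvert\mu_n\rvert=2p^n
\]
(B\"okstedt's $\THH(\Z_p;\FF_p)$ for $n=1$, the Ausoni--Rognes type answer for $n=2$, and a B\"okstedt spectral sequence in general). The decisive feature is that the unique \emph{even}-degree polynomial generator $\mu_n$ sits in degree exactly $2p^n$, so that $\pi_{>0}\bigl(\tau_{\leq 2p^n-1}Z\bigr)$ is generated as a ring by the odd-degree classes $\lambda_1,\dots,\lambda_n$; this is why the truncation is taken at $2p^n-1$. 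In particular $\bar g$ already annihilates $\pi_{>0}(\tau_{\leq 2p^n-1}Z)$, being a ring map carrying each $\lambda_i$ into the even-degree target at the odd degree $2p^i-1$. Translated through the cotangent complex, the relevant statement is that $\mathrm{TAQ}^{\E{2}}_{\FF_p}(\tau_{\leq 2p^n-1}Z)$ --- which one may identify, via $\mathrm{TAQ}^{\E{2}}_R(\THH(R))\simeq\Sigma\mathbb{L}^{\E{2}}_{R/\mathbb{S}}$ and the transitivity sequence for $\mathbb{S}\to\BP{n-1}\to\FF_p$ (using $\mathbb{L}^{\E{2}}_{\FF_p/\BP{n-1}}\simeq\bigoplus_{i=0}^{n-1}\Sigma^{2p^i-1}\FF_p$), with $\Sigma\bigl(\FF_p\otimes_{\BP{n-1}}\mathbb{L}^{\E{2}}_{\BP{n-1}/\mathbb{S}}\bigr)$ --- is concentrated in \emph{odd} degrees in the range $\ast<2p^n$.

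\smallskip
\emph{Conclusion.} Since after base change everything is $\FF_p$-linear, and the target $\tau_{\leq 2p^n-1}\THH(\FF_p)$ is an $\FF_p$-module with purely even Postnikov layers $\Sigma^{2i}H\FF_p$ ($1\leq i\leq p^n-1$), comparing $\tau_{\leq 2p^n-1}\bar g$ with $\tau_{\leq 2p^n-1}\bar h$ as $\E{2}$-$\FF_p$-algebra maps requires, at the $i$-th stage, vanishing of an obstruction and an indeterminacy in
\[
\mathrm{Der}^{\E{2}}_{\FF_p}\bigl(\tau_{\leq 2p^n-1}Z,\Sigma^{2i}H\FF_p\bigr)\;\cong\;\bigl(\pi_{2i}\,\mathrm{TAQ}^{\E{2}}_{\FF_p}(\tau_{\leq 2p^n-1}Z)\bigr)^{\vee},
\]
where the identification holds because all objects are $\FF_p$-linear (no Steenrod operations appear), and where truncating $Z$ affects $\mathrm{TAQ}^{\E{2}}$ only in degrees $>2p^n-1$ since $\mathrm{TAQ}^{\E{2}}$ preserves connectivity of maps. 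By the odd-degree concentration these groups vanish for all $i$ in range, so $\tau_{\leq 2p^n-1}\bar g=\tau_{\leq 2p^n-1}\bar h$, giving the proposition.

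\smallskip
\emph{Main obstacle.} The essential content is the input computation together with getting the arithmetic exactly right: one must show that $\mathrm{TAQ}^{\E{2}}_{\FF_p}(Z)$, equivalently the positive-degree part of $\pi_\ast\bigl(\FF_p\otimes_{\BP{n-1}}\THH(\BP{n-1})\bigr)$, carries nothing in even degrees below $2p^n$, the truncation being rigged to excise the single even class $\mu_n$ in degree $2p^n$. The observation that $\bar g$ kills the positive homotopy of $Z$ is cheap; what has teeth is promoting this to an equality of $\E{2}$-algebra maps, i.e.\ the vanishing of the $\E{2}$-André--Quillen obstruction classes, and it is there that one must genuinely analyze the cotangent complex (equivalently the $\E{2}$-operations on the $\lambda_i$ in this range) rather than merely its effect on homotopy groups.
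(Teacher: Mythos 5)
Your proposal takes a genuinely different route from the paper. After the common reduction to the base change $Z=\FF_p\otimes_{\BP{n-1}}\THH(\BP{n-1})\to\THH(\FF_p)$, the paper does not stay with $\BP{n-1}$: it observes that the $\E{3}$-map $\BPP\to\BP{n-1}$ induces an $\E{2}$-map $\THH(\BPP)\otimes_{\BPP}\FF_p\to Z$ which is an equivalence through degree $2p^n-1$ (using precisely the $\pi_\ast$ computation you cite), so that one may replace $\BP{n-1}$ by $\BPP$, and then by $\MU$ using that $\BPP$ is an $\E{4}$-retract of $\MU$. The factorization problem then becomes: show $\THH(\MU)\to\THH(\FF_p)$ factors as an $\E{3}$-map through $\MU$. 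Using $\THH(\MU)\simeq\MU[\SU]$, this is equivalent to nullity of an $\E{3}$-map $\SU\to\GL_1(\THH(\FF_p))$, i.e.\ of a pointed map $\BU\langle 6\rangle\to \B^3\GL_1(\THH(\FF_p))$, which vanishes because the source has an even cell structure while the target has odd homotopy. This even/odd parity argument does all the work that your André–Quillen obstruction computation is supposed to do, but without ever invoking the cotangent complex.

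The genuine gap in your proposal is the claimed odd-degree concentration of $\mathrm{TAQ}^{\E{2}}_{\FF_p}(\tau_{\leq 2p^n-1}Z)$, together with the ``equivalence'' you assert between it and the positive part of $\pi_\ast Z$. These are not the same object: already for $n\geq 3$ the truncation $\tau_{\leq 2p^n-1}Z$ contains decomposable classes such as $\lambda_1\lambda_2$ in the \emph{even} degree $2p^2+2p-2<2p^n-1$, so the positive part of $\pi_\ast$ is not concentrated in odd degrees, and $\tau_{\leq 2p^n-1}Z$ is not a square-zero extension (where your identification $\mathrm{TAQ}=I$ would hold). You would therefore have to argue directly that $\pi_{2i}\mathrm{TAQ}^{\E{2}}_{\FF_p}(\tau_{\leq 2p^n-1}Z)=0$ for $1\leq i\leq p^n-1$, which requires controlling both the relations (killed Dyer–Lashof operations, Browder brackets and their Bocksteins on the $\lambda_i$) and the generators, and you give no such analysis. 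The intermediate identification $\mathrm{TAQ}^{\E{2}}_R(\THH(R))\simeq\Sigma\mathbb{L}^{\E{2}}_{R/\mathbb{S}}$ is also asserted rather than justified at the $\E{2}$ level, and even granting it, identifying $\FF_p\otimes_{\BP{n-1}}\mathbb{L}^{\E{2}}_{\BP{n-1}/\mathbb{S}}$ via the transitivity sequence requires a handle on $\mathbb{L}^{\E{2}}_{\FF_p/\mathbb{S}}$, which you do not supply. So the skeleton of the obstruction argument is sound, but the decisive input --- the vanishing in even degrees of the relevant $\E{2}$-TAQ --- is precisely what is not established, whereas the paper's Thom-spectrum route obtains the needed parity statement for free from the even cell structure of $\BU\langle 6\rangle$.
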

\begin{proof}
    It evidently suffices to show that the map 
    $$\tau_{\leq 2p^n-1} (\THH(\BP{n-1}) \otimes_{\BP{n-1}} \FF_p) \to \tau_{\leq 2p^n-1}\THH(\FF_p)$$
    factors, as an $\E{2}$-algebra map, as the composite 
    $$\tau_{\leq 2p^n - 1} (\THH(\BP{n-1}) \otimes_{\BP{n-1}} \FF_p) \to \FF_p \to \tau_{\leq 2p^n-1} \THH(\FF_p).$$
    There is an $\E{3}$-map $\BPP \to \BP{n}$, which defines an $\E{2}$-map 
    $$\THH(\BPP) \otimes_\BPP \FF_p \to \THH(\BP{n-1}) \otimes_{\BP{n-1}} \FF_p.$$
    This map is an equivalence in degrees $\leq 2p^n - 1$.\footnote{For instance, this follows from \cite[Proposition 2.9]{thh-truncated-bp} (see also \cite[Remark 2.2.5]{thh-xn}), which says that for $n\leq \infty$, there is an isomorphism
    $$\pi_\ast (\THH(\BP{n-1}) \otimes_{\BP{n-1}} \FF_p) \cong \FF_p[\sigma^2(v_n)] \otimes \Lambda(\sigma(t_1), \cdots, \sigma(t_n)),$$
    where $|\sigma^2(v_n)| = 2p^n$ and $|\sigma(t_i)| = 2p^i-1$.} Therefore, it suffices to show that the map $\THH(\BPP) \otimes_{\BPP} \FF_p \to \THH(\FF_p)$ factors, as an $\E{2}$-map, as the composite 
    $$\THH(\BPP) \otimes_{\BPP} \FF_p \to \FF_p \to \THH(\FF_p);$$
    equivalently, that the map $\THH(\BPP) \to \THH(\FF_p)$ factors, as an $\E{2}$-map, as the composite
    $$\THH(\BPP) \to \BPP \to \THH(\FF_p).$$
    Here, the map $\BPP \to \THH(\FF_p)$ is just the composite of the map $\BPP \to \FF_p$ with the unit $\FF_p \to \THH(\FF_p)$.
    Since $\BPP$ is an $\E{4}$-algebra retract of $\MU$ (compatibly with their natural maps to $\FF_p$), it suffices to replace $\BPP$ by $\MU$ in the above discussion; in fact, we will even show that the map $\THH(\MU) \to \THH(\FF_p)$ factors, as an $\E{3}$-map, as the composite
    $$\THH(\MU) \to \MU \to \THH(\FF_p).$$
    Here, the map $\MU \to \THH(\FF_p)$ is just the composite of the map $\MU \to \FF_p$ with the unit $\FF_p \to \THH(\FF_p)$.

    Recall from \cite{thh-thom} and \cite{klang} that there is an equivalence $\THH(\MU) \simeq \MU[\SU]$ of $\Eoo$-$\MU$-algebras, and that the augmentation $\THH(\MU) \to \MU$ is given by taking $\MU$-chains of the augmentation $\SU \to \ast$. The $\Eoo$-$\MU$-linear map $\THH(\MU) \to \THH(\FF_p)$ is therefore equivalent to the data of an $\Eoo$-map $\SU \to \GL_1(\THH(\FF_p))$. Since $\THH(\FF_p)$ is concentrated in even degrees, $\GL_1(\THH(\FF_p))$ is an $\Eoo$-space with even homotopy. It therefore suffices to prove the following claim: any $\E{3}$-map $f: \SU \to X$ to an $\E{3}$-space $X$ with even homotopy factors (as an $\E{3}$-map) through the augmentation $\SU \to \ast$. Indeed, $f$ is equivalent to the data of a map $\B^3 f: \B^3 \SU \to \B^3 X$. Since $\B^3 \SU = \BU\pdb{6}$ has an even cell decomposition and $\B^3 X$ has odd homotopy, the map $\B^3 f$ is necessarily null (so $f$ is null as an $\E{3}$-map), as desired.
\end{proof}
The proof of the following result is a direct adaptation of that of \cite[Proposition 3.7]{mathew-kaledin}; it could also be proved using the methods of \cite{thh-xn}.
\begin{lemma}\label{injective-sigma}
Let $M$ be a perfect $\THH(\FF_p)$-module such that $\pi_i(M) = 0$ for $i<a$. If $M$ lifts to a perfect $\THH(\BP{n-1})$-module $\tilde{M}$, then $\sigma$-multiplication $\sigma: \pi_{i-2} M \to \pi_i M$ is injective for 
$i\leq a + 2p^n-1$.
\end{lemma}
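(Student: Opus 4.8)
The plan is to imitate the proof of \cite[Proposition 3.7]{mathew-kaledin}, with \cref{prop: map factors through Fp} in the role of the fact, used there, that $\tau_{\leq 2p-1}\THH(\Z_p)\to\tau_{\leq 2p-1}\THH(\FF_p)$ factors through $\FF_p$. Write $R=\THH(\FF_p)$, $S=\THH(\BP{n-1})$, $N=2p^n-1$; recall $\pi_\ast R=\FF_p[\sigma]$ with $|\sigma|=2$, and that ``$M$ lifts to $\tilde M$'' means $M\simeq\tilde M\otimes_S R$ along the natural map $S\to R$. First I would reduce to the case where $\tilde M$ is connective. Being perfect over the connective ring $S$, $\tilde M$ is $c$-connective for some $c$; since $\pi_0 S$ is local with residue field $\FF_p$ and $\pi_c\tilde M$ is finitely generated over it, the bottom of the Tor spectral sequence computing $\pi_\ast M$, together with Nakayama, gives $\pi_c M\cong\pi_c\tilde M\otimes_{\pi_0 S}\FF_p\neq 0$, so $M$ is exactly $c$-connective and hence $c\geq a$. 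Shifting $\tilde M$ and $M$ down by $c$, it suffices to prove: if $\tilde M$ is connective, then $\sigma\colon\pi_j M\to\pi_{j+2}M$ is injective for $j\leq N-2$; unshifting then yields injectivity of $\sigma\colon\pi_{i-2}M\to\pi_i M$ for $i\leq c+N$, which contains the asserted range $i\leq a+N$.

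The heart of the argument is the following comparison. Since $\tilde M$ and $S$ are connective, $\tilde M\otimes_S\tau_{>N}R$ is $(N+1)$-connective, so $\tau_{\leq N}M\simeq\tau_{\leq N}(\tilde M\otimes_S\tau_{\leq N}R)$ as $R$-modules. The $S$-action on $\tau_{\leq N}R$ is restricted along the ring map $S\to R\to\tau_{\leq N}R$, which, since $\tau_{\leq N}R$ is $N$-truncated, factors through $\tau_{\leq N}S$; by \cref{prop: map factors through Fp} the resulting map $\tau_{\leq N}S\to\tau_{\leq N}R$ is the composite $\tau_{\leq N}S\xrightarrow{\bar u}\FF_p\xrightarrow{\iota}\tau_{\leq N}R$, with $\iota$ the unit. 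Hence $\tau_{\leq N}R$, as an $(S,R)$-bimodule, is restricted along $\bar u$ from its canonical $(\FF_p,R)$-bimodule structure, and therefore $\tilde M\otimes_S\tau_{\leq N}R\simeq\bar M\otimes_{\FF_p}\tau_{\leq N}R$ as right $R$-modules, where $\bar M:=\tilde M\otimes_S\FF_p$ is a connective perfect $\FF_p$-module (concretely, $\bar u$ is the composite of the augmentation $S\to\BP{n-1}$ used in the proof of \cref{prop: map factors through Fp} with $\BP{n-1}\to\FF_p$). Putting $\tau_{>N}R$ back on the right, with $(N+1)$-connective error term $\bar M\otimes_{\FF_p}\tau_{>N}R$, gives an $R$-linear equivalence $\tau_{\leq N}M\simeq\tau_{\leq N}(\bar M\otimes_{\FF_p}R)$. (When $n=\infty$ one has $N=\infty$ and no truncation is needed, as \cref{prop: map factors through Fp} is then an honest factorization; one gets outright that $M$ is $\sigma$-torsionfree.)

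To conclude, by the K\"unneth formula $\pi_\ast(\bar M\otimes_{\FF_p}R)\cong\pi_\ast\bar M\otimes_{\FF_p}\FF_p[\sigma]$ is free as a $\FF_p[\sigma]$-module, so $\sigma$ acts injectively on it in every degree; transporting along the $R$-linear, hence $\sigma$-equivariant, equivalence above shows $\sigma\colon\pi_j M\to\pi_{j+2}M$ is injective whenever $j+2\leq N$, which is what the first paragraph reduced us to. I expect the only real friction to be in the middle paragraph: upgrading \cref{prop: map factors through Fp}, an equality of truncated $\E{2}$-ring maps, into the bimodule identification of $\tau_{\leq N}R$, and checking that the correction terms $\tilde M\otimes_S\tau_{>N}R$ and $\bar M\otimes_{\FF_p}\tau_{>N}R$ are genuinely $(N+1)$-connective --- which is exactly why connectivity of $\tilde M$ is arranged at the outset. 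The first and last paragraphs are routine or a K\"unneth computation.
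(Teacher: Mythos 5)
Your proposal is correct and follows essentially the same route as the paper's own proof, which is itself a direct adaptation of \cite[Proposition 3.7]{mathew-kaledin}: normalize connectivity, observe that $\tau_{\leq 2p^n-1}M$ agrees with the base change of $\tilde M$ along the truncated ring map, invoke \cref{prop: map factors through Fp} to see this truncated ring map factors through $\FF_p$, and conclude that $\tau_{\leq 2p^n-1}M$ is a free truncated-$\THH(\FF_p)$-module so $\sigma$ acts injectively in the stated range. The paper is terser (it simply says ``WLOG $a=0$'' and writes down the truncated base-change map directly), whereas you spell out the Nakayama argument showing $\tilde M$ and $M$ have the same connectivity and the bimodule bookkeeping needed to pass from the $\E{2}$-ring factorization to the module identification; those are exactly the points the paper leaves implicit, and your filling them in is consistent with what is intended.
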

\begin{proof}
To prove the result of the lemma, we can assume without loss of generality that $a=0$. Then, there is a map
$$M \to \tau_{\leq 2p^n-1} \tilde{M} \otimes_{\tau_{\leq 2p^n-1} \THH(\BP{n-1})} \tau_{\leq 2p^n-1} \THH(\FF_p),$$
which is an equivalence on $\tau_{\leq 2p^n-1}$. By \cref{prop: map factors through Fp}, the map $\tau_{\leq 2p^n-1} \THH(\BP{n-1}) \to \tau_{\leq 2p^n-1}\THH(\FF_p)$ factors through $\FF_p \to \tau_{\leq 2p^n-1} \THH(\FF_p)$, so we see that $\tau_{\leq 2p^n-1} M$ is a free $\tau_{\leq 2p^n-1} \THH(\FF_p)$-module on classes in nonnegative degrees. Therefore, $\sigma$-multiplication is injective through the stated range.
\end{proof}
\cref{sigma-tf} is now a consequence of the following, whose proof is a direct adaptation of that of \cite[Proposition 3.8]{mathew-kaledin}.
\begin{prop}\label{freeness}
Let $M$ be a perfect $\THH(\FF_p)$-module with Tor-amplitude in $[-p^n,p^n]$. If $M$ lifts to a perfect $\THH(\BP{n-1})$-module $\tilde{M}$, then $M$ is free.
\end{prop}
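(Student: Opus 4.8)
The plan is to deduce \cref{freeness} from \cref{injective-sigma} together with the structure theory of perfect modules over $\THH(\FF_p)$, following \cite[Proposition 3.8]{mathew-kaledin} almost verbatim. Since $\pi_\ast\THH(\FF_p)=\FF_p[\sigma]$ with $|\sigma|=2$ is a graded principal ideal domain, hence of graded global dimension one, every perfect $\THH(\FF_p)$-module splits as a finite direct sum of shifts of $\THH(\FF_p)$ and of the cyclic modules $\THH(\FF_p)/\sigma^k:=\cofib(\Sigma^{2k}\THH(\FF_p)\xrightarrow{\sigma^k}\THH(\FF_p))$, $k\geq 1$; lifting a homogeneous $\FF_p[\sigma]$-basis of $\pi_\ast M$ to a map $\bigoplus_i\Sigma^{d_i}\THH(\FF_p)\to M$ shows that ``$M$ is free'' is equivalent to ``no torsion summand $\THH(\FF_p)/\sigma^k$ occurs'', i.e.\ to ``$\sigma$ acts injectively on $\pi_\ast M$''. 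When $n=\infty$ this is immediate: $M$ is bounded below (being perfect over the connective ring $\THH(\FF_p)$), and \cref{injective-sigma} then forces $\sigma$ to act injectively on all of $\pi_\ast M$. So I would assume $n<\infty$ and, for contradiction, that some torsion summand $\Sigma^e\THH(\FF_p)/\sigma^k$ with $k\geq 1$ occurs in $M$.

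The argument then rests on two computations for this summand. First, $\pi_\ast(\Sigma^e\THH(\FF_p)/\sigma^k)$ is $\FF_p$ in the degrees $e,e+2,\dots,e+2(k-1)$ and vanishes otherwise, so $\sigma\colon\pi_{e+2k-2}\to\pi_{e+2k}$ on this summand is the map $\FF_p\to 0$; as $M$ is a direct sum, $\sigma\colon\pi_{e+2k-2}M\to\pi_{e+2k}M$ is \emph{not} injective. Second, since $\sigma$ is sent to $0$ under $\THH(\FF_p)\to\FF_p$, base-changing the defining cofiber sequence along $\THH(\FF_p)\to\FF_p$ gives $(\Sigma^e\THH(\FF_p)/\sigma^k)\otimes_{\THH(\FF_p)}\FF_p\simeq\Sigma^e\FF_p\oplus\Sigma^{e+2k+1}\FF_p$ — note the extra $+1$, which comes from the suspension in $\cofib(0\colon\Sigma^{2k}\FF_p\to\FF_p)$ — so $\pi_\ast(M\otimes_{\THH(\FF_p)}\FF_p)$ is nonzero in degrees $e$ and $e+2k+1$.

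Now I would feed in the hypotheses. The Tor-amplitude assumption says exactly that $\pi_\ast(M\otimes_{\THH(\FF_p)}\FF_p)$ is concentrated in $[-p^n,p^n]$; combined with the second computation this yields $e\geq -p^n$ and $e+2k+1\leq p^n$, i.e.\ $e+2k\leq p^n-1$. The same base-change computation (or just: $M$ is bounded below and its bottom homotopy group survives reduction to $\FF_p$) shows $\pi_iM=0$ for $i<-p^n$. Then I apply \cref{injective-sigma} to $M$ — which lifts to the perfect $\THH(\BP{n-1})$-module $\tilde M$ by hypothesis — with $a=-p^n$: it gives that $\sigma\colon\pi_{i-2}M\to\pi_iM$ is injective for $i\leq -p^n+2p^n-1=p^n-1$. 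Since $\sigma\colon\pi_{e+2k-2}M\to\pi_{e+2k}M$ is not injective, this forces $e+2k\geq p^n$, contradicting $e+2k\leq p^n-1$. Hence no torsion summand occurs, and $M$ is free.

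The only genuinely delicate point is the degree bookkeeping in the second computation: the ``$+1$'' in the degree $e+2k+1$ of the top $\FF_p$-summand of $(\Sigma^e\THH(\FF_p)/\sigma^k)\otimes_{\THH(\FF_p)}\FF_p$ is precisely what makes the inequalities $e+2k\geq p^n$ and $e+2k\leq p^n-1$ incompatible; with $e+2k-1$ in its place the argument would collapse. Everything else is formal once \cref{injective-sigma} and the structure theory of $\FF_p[\sigma]$-modules are in hand.
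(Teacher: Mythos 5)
Your argument is correct and follows the paper's proof (and, as both do, Mathew's \cite[Proposition 3.8]{mathew-kaledin}) essentially verbatim: decompose $M$ into free and cyclic $\THH(\FF_p)$-module summands, note that a torsion summand $\Sigma^i\THH(\FF_p)/\sigma^j$ has Tor-amplitude $[i,i+2j+1]$ and produces a $\sigma$-torsion class in $\pi_{i+2j-2}M$, and contradict this with \cref{injective-sigma} applied with $a=-p^n$. The degree bookkeeping you flag as delicate is carried out identically in the paper ($i+2j+1\leq p^n$ versus $i+2j>p^n-1$), so the two proofs coincide.
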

\begin{proof}
The argument is the same as in \cite[Proposition 3.8]{mathew-kaledin}. Indeed, $M$ is a direct sum of $\THH(\FF_p)$-modules which are free or of the form $M_{i,j} = \Sigma^i \THH(\FF_p)/\sigma^j$ (see \cite[Proposition 3.3]{mathew-kaledin}). Since $M_{i,j}$ has Tor-amplitude in $[i,i+2j+1]$, the condition on $M$ implies that $M_{i,j}$ could appear as a summand of $M$ if and only if $-p^n\leq i \leq i+2j+1 \leq p^n$.

The class $\sigma^{j-1}[i]\in \pi_{i+2j-2} M_{i,j}$ is killed by $\sigma$, so taking $a = -p^n$ in \cref{injective-sigma}, we see that
$$i+2j > -p^n + 2p^n - 1 = p^n-1.$$
In particular, $i+2j+1 > p^n$, which contradicts $i+2j+1\leq p^n$. Therefore, no $M_{i,j}$ can be a summand of $M$, so that $M$ is free.
\end{proof}

In the remainder of this note, we will clarify the relationship between liftings of $X$ itself and Hodge-de Rham degeneration. First, observe that assumption (b) in \cref{hdr-degen} is only a condition on $\QCoh(X)$, which is essentially why \cref{higher-dim-degen} is the more natural noncommutative statement. It seems to me that assumption (a) in \cref{hdr-degen} could be removed if we asked that the structure sheaf $\co_X$ itself lifted to a sheaf of $\E{2}$-$\BP{n-1}$-algebras.

One could ask about lifting $X$ itself as an $\Eoo$-spectral scheme in the current setup \cite{SAG} of spectral algebraic geometry.
Unfortunately, this question often does not make sense, since $\BP{n-1}$ is generally not an $\Eoo$-ring \cite{lawson-bp, senger-bp}. 
Nevertheless, the question does make sense if, for instance, $n=2$ (since $\BP{1}$ is an $\Eoo$-ring). In this case, requiring that $X$ lift is significantly stronger than the assumptions of \cref{hdr-degen}, as shown by the following.
\begin{prop}\label{ku-lifting}
Let $X$ be a smooth and proper $\FF_p$-scheme. If $X$ lifts to a $p$-adic flat $\ku^\wedge_p$-scheme $\fr{X}$, then the Hodge-de Rham spectral sequence for $X$ degenerates at the $E_1$-page.
\end{prop}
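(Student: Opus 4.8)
The plan is to transfer the degeneration statements from characteristic zero to characteristic $p$ along a flat lift of $X$ over $\Z_p$: a bare $\Z_p$-lift already supplies everything except a single Hodge-theoretic input, and the flat $\ku^\wedge_p$-scheme $\fr{X}$ is used precisely to supply that input. In particular, in contrast with \cref{hdr-degen}, there is no dimension hypothesis, and hypothesis (a) of \cref{hdr-degen} on the HKR spectral sequence is here part of the conclusion.

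Since $\ku^\wedge_p/\beta \simeq \Z_p$, base change produces a flat $\Z_p$-scheme $\fr{X}_0 := \fr{X}\otimes_{\ku^\wedge_p}\Z_p$ lifting $X$. As $\fr{X}$ is a lift of the proper scheme $X$ we may take it to be proper over $\ku^\wedge_p$; then $\fr{X}_0$ is proper over $\Z_p$, and flatness together with smoothness of the special fibre $X/\FF_p$ makes $\fr{X}_0$ smooth over $\Z_p$. Hence $\HH(\fr{X}_0/\Z_p)$ is a perfect $\Z_p$-complex carrying an $S^1$-action, with $\HH(\fr{X}_0/\Z_p)\otimes_{\Z_p}\FF_p \simeq \HH(X/\FF_p)$. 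Because $\FF_p \simeq \Z_p/p$ is a finite Koszul complex over $\Z_p$ and the Tate construction $(-)^{tS^1}$ preserves cofibre sequences, $(-)\otimes_{\Z_p}\FF_p$ commutes with $(-)^{tS^1}$, so also $\HP(\fr{X}_0/\Z_p)\otimes_{\Z_p}\FF_p \simeq \HP(X/\FF_p)$. By \cite[Remark 3.6]{antieau-bhatt-mathew-counterexample} it therefore suffices to prove that both the HKR and the Tate spectral sequences of $X$ collapse at their $E_2$-pages.

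Over $\QQ_p$ this is known: the HKR spectral sequence of the smooth proper $\QQ_p$-scheme $\fr{X}_{0,\QQ}$ collapses for elementary reasons (the Hodge decomposition of Hochschild homology over a $\QQ$-algebra), and its Tate (equivalently, noncommutative Hodge-de Rham) spectral sequence collapses by Kaledin's degeneration theorem; see \cite{mathew-kaledin}. Suppose now that the Hodge cohomology $\R\Gamma(\fr{X}_0;\Omega^j_{\fr{X}_0/\Z_p})$ is a flat $\Z_p$-module for every $j$ — equivalently, that the Hodge numbers of $X$ agree with those of $\fr{X}_{0,\QQ}$. Then the $E_2$-pages of the HKR and of the Tate spectral sequences of $\fr{X}_0/\Z_p$ are flat over $\Z_p$, so every differential occurring in them is a map of $\Z_p$-torsion-free modules that vanishes after inverting $p$, hence vanishes; both spectral sequences collapse over $\Z_p$. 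Reducing modulo $p$ (which introduces no $\Tor$-terms, by flatness) the HKR and Tate spectral sequences of $X$ collapse, and \cite[Remark 3.6]{antieau-bhatt-mathew-counterexample} then gives the collapse of the Hodge-de Rham spectral sequence of $X$ at $E_1$.

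The main obstacle is the flatness of $\R\Gamma(\fr{X}_0;\Omega^j_{\fr{X}_0/\Z_p})$ over $\Z_p$, i.e.\ the assertion that possessing a flat $\ku^\wedge_p$-scheme lift forces the Hodge numbers of $X$ to be those of its characteristic-zero fibre. This is exactly the rigidity that a bare $\Z_p$-lift fails to provide — it is the phenomenon underlying Petrov's counterexample — so the argument must genuinely use the topology of $\ku^\wedge_p$ rather than just its connective cover $\Z_p$: one has to extract Hodge-number rigidity from the degree-$2$ class $\beta$ and the higher Postnikov $k$-invariants of $\ku^\wedge_p$, which, as with the first $k$-invariant of $\BP{1}/v_1^2$ recalled above, are built from Steenrod operations. \'Etale-locally the corresponding statement is automatic, since Hochschild homology of a smooth algebra splits; so the content is entirely global, and I expect establishing it to be the technical heart of \cref{ku-lifting}.
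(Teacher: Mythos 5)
Your reduction to the $\Z_p$-lift $\fr{X}_0 = \fr{X}\otimes_{\ku^\wedge_p}\Z_p$ is correct and matches the paper's opening move, but beyond that point you have left a genuine gap, and you say so yourself: the entire argument hinges on the assertion that $\R\Gamma(\fr{X}_0;\Omega^j_{\fr{X}_0/\Z_p})$ is $\Z_p$-flat for all $j$ (equivalently, that the Hodge numbers of $X$ match those of the generic fibre $\fr{X}_{0,\QQ_p}$). You observe, correctly, that this is precisely what a bare $\Z_p$-lift fails to guarantee, and you propose to extract it from the $k$-invariants of $\ku^\wedge_p$ via Steenrod operations, but you give no argument. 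As things stand this is not a proof but a reduction to an unestablished (and, to my knowledge, not obviously true) statement.

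The paper does something structurally quite different and much shorter, and it never touches the Hodge numbers or the HKR/Tate spectral sequences at all. Instead of trying to control ranks and then invoke the commutative-algebra transfer via \cite[Remark~3.6]{antieau-bhatt-mathew-counterexample}, the paper exploits the chromatic structure of $\ku^\wedge_p$ directly: since $\fr{X}$ is flat over $\ku^\wedge_p$, one has $\pi_0\,\Lone\co_{\fr{X}} \cong \co_{\fr{X}_0}$, and by the general fact (\cite{k1local}) that $\pi_0$ of a $K(1)$-local $\Eoo$-ring carries a natural $\delta$-ring structure, $\co_{\fr{X}_0}$ is a sheaf of $\delta$-$\Z_p$-algebras. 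A $\delta$-ring lift of $X$ splits the Hodge--Tate gerbe over $\fr{X}_0$ (\cite[Proposition~5.12]{prismatization}), which gives degeneration of the conjugate spectral sequence for $X$ and hence of the Hodge-de Rham spectral sequence. This sidesteps entirely both the Hodge-number rigidity you needed and the detour through noncommutative (HKR/Tate) spectral sequences. If you want to salvage your approach, you would need to actually establish the flatness of Hodge cohomology; but be aware that the paper's route shows degeneration can be obtained without that, so there is no a priori reason to believe the flatness holds.
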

\begin{proof}
The lift $\fr{X}$ defines a lift of $X$ to $\Z_p$ via $\fr{X}_0 := \fr{X} \otimes_{\ku^\wedge_p} \Z_p$. It suffices to show that $\fr{X}_0$ admits a $\delta$-ring structure; then, the Hodge-Tate gerbe over $\fr{X}_0$ (from \cite[Proposition 5.12]{prismatization}) splits, so that the conjugate (and hence Hodge-de Rham) spectral sequence for $X$ degenerates. The fact that $\fr{X}$ is assumed to be flat implies that $\pi_0 \Lone \co_{\fr{X}} \cong \pi_0 \co_{\fr{X}} = \co_{\fr{X}_0}$. By \cite{k1local}, if $R$ is any $K(1)$-local $\Eoo$-ring, then $\pi_0(R)$ admits a $\delta$-ring structure (functorially in $R$). Globalizing, we see that $\pi_0 \Lone \co_{\fr{X}} = \co_{\fr{X}_0}$ has a $\delta$-ring structure, which implies the desired claim.
\end{proof}
\begin{remark}
It follows from \cref{ku-lifting} that lifting an arbitrary-dimensional $X$ to a $\ku^\wedge_p$-scheme suffices to conclude Hodge-de Rham degeneration; in particular, this assumption is significantly stronger than those of \cref{hdr-degen}. One intermediate between the assumptions of \cref{ku-lifting} and \cref{hdr-degen} is the following: one could assume that $\co_X$ only admit a lift to a sheaf of $\E{m}$-$\BP{n-1}$-algebras (whenever this makes sense). \cref{ku-lifting} corresponds to the case $n=2$ and $m=\infty$, while \cref{hdr-degen} roughly corresponds to the case $m=1$ (and $n$ arbitrary). What constraints does such a lifting impose on the Hodge-de Rham spectral sequence for $X$? For instance, if $p$ is an odd prime, and $\co_X$ admits a flat lift to a sheaf of $\E{2n+1}$-$\ku^\wedge_p$-algebras, then the general construction of power operations (following \cite{k1local}) along with the equivalence $\Lone \Conf^\un_p(\RR^{2n+1}) \simeq \Lone S^{-1}/p^n$ of \cite{davis-odd-primary-bo} shows that $\fr{X}_0$ has a lift of Frobenius modulo $p^{n+1}$.
%
In particular, if $\co_X$ admits a flat lift to a sheaf of $\E{3}$-$\ku^\wedge_p$-algebras, and $\dim(X)<p$, then \cite{deligne-illusie} implies that the Hodge-de Rham spectral sequence degenerates for $X$.
\end{remark}
\begin{remark}
Finally, one might wonder whether a lifting of $X$ to $\BP{n-1}$, or $\ku^\wedge_p$, or even the sphere spectrum can be used to prove that the HKR spectral sequence degenerates. Unfortunately, it seems that there is no clear relationship between HKR degeneration and liftings to the sphere. For instance, the stack $B\mu_p$ over $\Z_p$ lifts to the $p$-complete sphere spectrum (by writing $\mu_p = \spec S[\Z/p]$), but the HKR spectral sequence for $B\mu_p$ does not degenerate by \cite[Theorem 4.6]{antieau-bhatt-mathew-counterexample}. Nevertheless, there are some liftability and torsion-freeness criteria, such as those described by Antieau-Vezzosi in \cite[Remark 1.6 and Example 1.7]{antieau-vezzosi}, which do guarantee HKR degeneration.
\end{remark}

\bibliographystyle{alpha}
\bibliography{main}
\end{document}